\def\BBox{\kern  -0.2cm\hbox{\vrule width 0.2cm height 0.2cm}}
\newtheorem{example}{Example}
\newtheorem{teo}{Theorem}[section]
\theoremstyle{definition}
\theoremstyle{remark}
\title{A note on strong Skolem starters}
\author{Adrián Vázquez-Ávila\thanks{adrian.vazquez@unaq.edu.mx}\\
{\small Subdirección de Ingeniería y Posgrado}\\
{\small Universidad Aeronáutica en Querétaro}\\
}
\date{}
\begin{document}
\maketitle
\begin{abstract}

In 1991, Shalaby conjectured that any additive group $\mathbb{Z}_n$, where $n\equiv1$ or 3
(mod 8) and $n \geq11$, admits a strong Skolem starter and constructed these starters of all admissible orders $11\leq n\leq57$. Only finitely many strong Skolem starters have been known. Recently, in [O. Ogandzhanyants, M. Kondratieva and N. Shalaby, \emph{Strong Skolem Starters}, J. Combin. Des. {\bf 27} (2018), no. 1, 5--21] was given an infinite families of them. In this note, an infinite family of strong Skolem starters for $\mathbb{Z}_n$, where $n\equiv3$ mod 8 is a prime integer, is presented.
\end{abstract}

\textbf{Keywords.} Strong starters, Skolem starters, quadratic residues.

\section{Introduction}
Let $G$ be a finite additive abelian group of odd order $n=2q+1$, and let $G^*=G\setminus\{0\}$ be the set of non-zero elements of $G$. A \emph{starter} for $G$ is a set $S=\{\{x_1,y_1\},\ldots,\{x_q,y_q\}\}$ such that
$\left\{x_1,\ldots,x_q,y_1,\ldots,y_q\right\}=G^*$
and $\left\{\pm(x_i-y_i):i=1,\ldots,q\right\}=G^*$. Moreover, if $\left|\left\{x_i+y_i:i=1,\ldots,q\right\}\right|=q$, then $S$ is called \emph{strong starter} for $G$.	

Strong starters were first introduced by Mullin and Stanton in \cite{MR0234587} in constructing of Room squares. Starters and strong starters have been useful to construct many combinatorial designs such as Room cubes \cite{MR633117}, Howell designs \cite{MR728501,MR808085}, Kirkman triple systems \cite{MR808085,MR0314644}, Kirkman squares and cubes \cite{MR833796,MR793636},  and factorizations of complete graphs \cite{MR0364013,MR2206402,MR1010576,MR623318,MR685627}. Moreover, there are some interesting results on strong starters for cyclic groups \cite{Avila,MR808085}, in particular for $\mathbb{F}_q$ \cite{Avila,MR0325419,dinitz1984,MR0392622,MR0249314,MR0260604}, and for finite abelian groups \cite{MR1010576,MR1044227}.

The Skolem starters, which we will deal, are defined for additive groups $\mathbb{Z}_n$ of integers modulo $n$.

Let $n=2q+1$, and $1<2<\ldots<2q$ be the order of $\mathbb{Z}_n^*$. A starter for $\mathbb{Z}_n$ is \emph{Skolem} if it can be written as $S=\{\{x_i,y_i\}\}_{i=1}^q$ such that $y_i>x_i$ and $y_i-x_i=i$ (mod n), for $i=1,\ldots,q$. In \cite{ShalabyThesis}, it was proved that, the Skolem starter for $\mathbb{Z}_n$ exits if and only if $n\equiv1$ or 3 mod 8.

A starter which is both Skolem and strong is called \emph{strong Skolem starter}.

\begin{example}
The set $S=\{\{9,10\},\{2,4\},\{5,8\},\{3,7\},\{1,6\}\}$ is a strong Skolem starter for $\mathbb{Z}_{11}$.
\end{example}

In \cite{ShalabyThesis}, it was conjectured that any additive group $\mathbb{Z}_n$, where $n\equiv1$ or 3
mod 8 and $n \geq11$, admits a strong Skolem starter and constructed these starters of all admissible orders $11\leq n\leq57$. Only finitely many strong Skolem starters have been known. Recently, in \cite{Shalaby}, it was given a geometrical interpretation of strong Skolem starters and it was constructed an infinite families of them. In this note, an infinite family of strong Skolem starters for $\mathbb{Z}_n$, where $n\equiv3$ mod 8 is a prime integer, is presented.

This paper is organized as follows. 
In Section 2, we recall some basic properties about quadratic residues. And, in section 3, we prove the main theorem and we present some examples.
\section{Quadratic residues}\label{sec:quadratic}
Let $q$ be an odd prime power. An element $x\in\mathbb{F}_q^*$ is called a \emph{quadratic residue} if there exists an element $y\in\mathbb{F}_q^{*}$ such that $y^2=x$. If there is no such $y$, then $x$ is called a \emph{non-quadratic residue.} The set of quadratic residues of $\mathbb{F}_q^{*}$ is denoted by $QR(q)$ and the set of non-quadratic residues is denoted by $NQR(q)$. It is well known that $QR(q)$ is a cyclic subgroup of $\mathbb{F}_q^{*}$ of cardinality $\frac{q-1}{2}$ (see for example \cite{MR2445243}). As well as, it is well known, if either $x,y\in QR(q)$ or $x,y\in NQR(q)$, then $xy\in QR(q)$, and if $x\in QR(q)$ and $y\in NQR(q)$, then $xy\in NQR(q)$.

The following theorems are well known results on quadratic residues. 
For more details of this kind of results the reader may consult \cite{burton2007elementary,MR2445243}.

\begin{teo}[Eulers' criterion]
	Let $q$ be an odd prime and $x\in\mathbb{F}_q^{*}$, then
	\begin{enumerate}
		\item $x\in QR(q)$ if and only if $x^{\frac{q-1}{2}}=1$.
		\item $x\in NQR(q)$ if and only if $x^{\frac{q-1}{2}}=-1$.
	\end{enumerate}
\end{teo}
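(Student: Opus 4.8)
The plan is to prove both statements at once by first pinning down the possible values of $x^{(q-1)/2}$ and then identifying which value corresponds to quadratic residues. First I would observe that, since $\mathbb{F}_q^{*}$ is a group of order $q-1$ (equivalently, by Fermat's little theorem), every $x\in\mathbb{F}_q^{*}$ satisfies $x^{q-1}=1$. Writing this as $(x^{\frac{q-1}{2}})^2-1=0$, the element $y:=x^{\frac{q-1}{2}}$ is a root of $t^2-1=(t-1)(t+1)$ in the field $\mathbb{F}_q$; since a field has no zero divisors, $y\in\{1,-1\}$, and these two values are distinct because $q$ is odd (otherwise $2=0$ in $\mathbb{F}_q$). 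This dichotomy is what will ultimately force statements 1 and 2 to be complementary.

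Next I would establish the easy direction of statement 1: if $x\in QR(q)$, then $x=z^2$ for some $z\in\mathbb{F}_q^{*}$, whence $x^{\frac{q-1}{2}}=z^{q-1}=1$. This shows that every quadratic residue is a root of the polynomial $f(t)=t^{\frac{q-1}{2}}-1$ over $\mathbb{F}_q$.

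For the converse of statement 1 --- which I expect to be the crux --- I would use a counting argument over the field. The polynomial $f(t)=t^{\frac{q-1}{2}}-1$ has degree $\frac{q-1}{2}$ and hence at most $\frac{q-1}{2}$ roots in $\mathbb{F}_q$. By the previous step, all of the quadratic residues are roots of $f$, and there are exactly $|QR(q)|=\frac{q-1}{2}$ of them (as recalled at the start of this section). These therefore account for every root of $f$, so $x^{\frac{q-1}{2}}=1$ forces $x\in QR(q)$, completing statement 1.

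Finally, statement 2 follows for free: any $x\in\mathbb{F}_q^{*}$ has $x^{\frac{q-1}{2}}\in\{1,-1\}$, and by statement 1 the value $1$ is attained exactly on $QR(q)$; hence $x^{\frac{q-1}{2}}=-1$ holds precisely when $x\notin QR(q)$, that is, when $x\in NQR(q)$. The main obstacle is the converse of statement 1, and the key leverage is the interaction between the degree bound on the roots of $f$ and the known cardinality $\frac{q-1}{2}$ of $QR(q)$; without the cardinality fact the counting would not close.
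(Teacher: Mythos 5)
Your proof is correct. Note, however, that the paper does not actually prove this theorem: it states Euler's criterion as a well-known fact and refers the reader to the cited texts of Burton and of Hardy--Wright, so there is no in-paper argument to compare yours against. What you give is the classical root-counting proof, and it is complete: Fermat's little theorem forces $x^{\frac{q-1}{2}}\in\{1,-1\}$ (with $1\neq-1$ because $q$ is odd); squares clearly satisfy $x^{\frac{q-1}{2}}=1$; and since the polynomial $t^{\frac{q-1}{2}}-1$ has at most $\frac{q-1}{2}$ roots over a field while $|QR(q)|=\frac{q-1}{2}$ (a fact the paper does recall in Section~2), the quadratic residues exhaust those roots, which closes the converse and makes statement~2 automatic. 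Each ingredient you invoke is either standard (Fermat, the degree bound on roots of a polynomial over a field) or explicitly available in the paper, so the argument stands on its own and in effect supplies the proof the paper chose to omit.
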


\begin{teo}\label{col:menosuno}
	Let $q$ be an odd prime power, then
	\begin{enumerate}
		\item $-1\in QR(q)$ if and only if $q\equiv1$ mod $4$.
		\item $-1\in NQR(q)$ if and only if $q\equiv3$ mod $4$.
	\end{enumerate}
\end{teo}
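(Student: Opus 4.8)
The plan is to apply Euler's criterion directly to the element $x=-1$. The only subtlety to flag at the outset is that Euler's criterion was quoted above for an odd prime $q$, whereas the present statement concerns an odd prime power; so I would first record that the criterion extends verbatim to prime powers. This is because $\mathbb{F}_q^{*}$ is a cyclic group of order $q-1$ for every prime power $q$, and the proof of the criterion uses only this cyclic structure together with the fact that the squaring map has image exactly $QR(q)$. Concretely, if $g$ generates $\mathbb{F}_q^{*}$, then $-1$ is the unique element of order $2$, namely $g^{(q-1)/2}$, and it is a square iff $(q-1)/2$ is even.

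Granting this, the heart of the argument is a parity computation. By Euler's criterion, $-1\in QR(q)$ if and only if $(-1)^{(q-1)/2}=1$, and the latter holds precisely when the exponent $(q-1)/2$ is even. I would then split into cases according to the residue of $q$ modulo $4$: if $q\equiv1\pmod 4$ then $q-1\equiv0\pmod 4$, so $(q-1)/2$ is even and $(-1)^{(q-1)/2}=1$; if instead $q\equiv3\pmod 4$ then $q-1\equiv2\pmod 4$, so $(q-1)/2$ is odd and $(-1)^{(q-1)/2}=-1$. Since $q$ is odd, these are the only two possibilities, and this settles part (1).

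Part (2) follows immediately. Because $QR(q)$ and $NQR(q)$ partition $\mathbb{F}_q^{*}$ and $-1\neq0$, we have $-1\in NQR(q)$ if and only if $-1\notin QR(q)$, which by part (1) occurs exactly when $q\not\equiv1\pmod 4$, i.e.\ when $q\equiv3\pmod 4$. Equivalently, one can read part (2) straight off the second clause of Euler's criterion, since the same parity computation shows $(-1)^{(q-1)/2}=-1$ iff $q\equiv3\pmod 4$.

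I do not anticipate any real obstacle in this argument; it is essentially a one-line consequence of Euler's criterion. The only point that genuinely needs care is the prime-versus-prime-power gap between the quoted form of the criterion and the desired conclusion, which is resolved by the cyclicity of $\mathbb{F}_q^{*}$ noted in the first paragraph.
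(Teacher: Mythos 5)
Your proof is correct. The paper itself offers no proof of this theorem---it is stated as a well-known result with a pointer to the textbooks of Burton and of Hardy and Wright---and your argument (Euler's criterion applied to $x=-1$, plus the parity of $(q-1)/2$) is exactly the standard one found in those references, so there is no divergence of approach to report. Your attention to the prime-versus-prime-power gap is warranted, since the paper states Euler's criterion only for odd primes $q$ yet asserts this theorem for odd prime powers; your observation that $\mathbb{F}_q^{*}$ is cyclic, so that $-1=g^{(q-1)/2}$ is a square if and only if $(q-1)/2$ is even, closes that gap correctly.
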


\begin{teo}\label{inverso}
	Let q be an  odd prime. If $q\equiv3$ mod $4$, then 
	\begin{enumerate}
		\item $x\in QR$ if and only if $-x\in QNR$.
		\item $x\in NQR$ if and only if $-x\in QR$.
	\end{enumerate}
\end{teo}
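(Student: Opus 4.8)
The plan is to reduce both statements to the multiplicative behavior of residues together with the single arithmetic fact that $-1$ is a non-residue in this case. First I would invoke Theorem \ref{col:menosuno}: since $q\equiv 3\pmod 4$, it gives $-1\in NQR(q)$. This is the one input that distinguishes the present case from $q\equiv 1\pmod 4$, where $-1$ would instead be a residue and the conclusion would fail.

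Next I would write $-x=(-1)\cdot x$ and apply the multiplication rule recorded at the start of Section \ref{sec:quadratic}, namely that the product of a residue and a non-residue lies in $NQR(q)$, while the product of two non-residues lies in $QR(q)$. For the forward direction of (1), if $x\in QR(q)$ then $-x=(-1)x$ is a product of a non-residue and a residue, hence $-x\in NQR(q)$. For the converse, if $-x\in NQR(q)$ then $x=(-1)(-x)$ is a product of two non-residues, hence $x\in QR(q)$.

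For statement (2) I would run the same argument with the roles of $QR(q)$ and $NQR(q)$ interchanged, or, more economically, observe that $QR(q)$ and $NQR(q)$ partition $\mathbb{F}_q^{*}$ and that $x\mapsto -x$ is an involution of $\mathbb{F}_q^{*}$. Since part (1) shows this involution maps $QR(q)$ bijectively into $NQR(q)$, a counting or complementation argument forces it to map $NQR(q)$ onto $QR(q)$, which is exactly (2).

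I do not expect a genuine obstacle here: the result is an immediate consequence of the group structure of $QR(q)$ inside $\mathbb{F}_q^{*}$ once $-1$ is identified as a non-residue. The only point requiring a little care is to use the multiplicative rule in \emph{both} directions of each equivalence, in particular the fact that the product of two non-residues is a residue, which is what powers the converse implications.
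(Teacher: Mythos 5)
Your proof is correct and complete. Note, however, that the paper itself offers no proof of Theorem \ref{inverso}: it is listed among the ``well known results'' on quadratic residues, with the reader referred to the cited number-theory texts. So there is no in-paper argument to compare against; what you have written is precisely the standard argument those references give, and it is the natural one. You correctly isolate the two ingredients already available in Section \ref{sec:quadratic} --- Theorem \ref{col:menosuno} (that $-1\in NQR(q)$ when $q\equiv 3\pmod 4$) and the multiplicative rules for residues and non-residues --- and you are right that the converse directions need the fact that a product of two non-residues is a residue, not merely the residue-times-non-residue rule. Your observation that part (2) is just the contrapositive of part (1) under the partition $\mathbb{F}_q^{*}=QR(q)\cup NQR(q)$ is also a clean economy. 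One incidental remark: the theorem statement in the paper writes ``$QNR$'' where it means ``$NQR$''; your proof implicitly corrects this, which is the right reading.
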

\section{Main results}\label{sec:main}
In \cite{Avila}, it was proved that, if $q\equiv3$ mod 4 is an odd prime power with $q\neq3$, and $\alpha$ is a generator of $QR(q)$ and $\beta+1\neq0$, then the following set
\begin{eqnarray*}\label{strong_1}
S_\beta=\left\{\{\alpha,\alpha\beta\},\{\alpha^2,\alpha^2\beta\},\ldots,\{\alpha^\frac{q-1}{2},\alpha^\frac{q-1}{2}\beta\}\right\},
\end{eqnarray*} 
is a strong starter for $\mathbb{F}_q$. 

Hence, we have the following

\begin{teo}[Main Theorem]
If $q\equiv3$ (mod 8) is an odd prime, $\alpha\in\mathbb{Z}_q$ is a generator of $QR(q)\subseteq\mathbb{Z}_q$ and $\beta\in\{2,\frac{q+1}{2}\}$, then the strong starter $$S_\beta=\left\{\{\alpha,\beta\alpha\},\{\alpha^2,\beta\alpha^2\},\ldots,\{\alpha^\frac{q-1}{2},\beta\alpha^\frac{q-1}{2}\}\right\},$$ is Skolem for $\mathbb{Z}_q$.
\end{teo}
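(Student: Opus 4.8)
The plan is to take the strong-starter property of $S_\beta$ for granted from the result of \cite{Avila} and to concentrate entirely on verifying the \emph{Skolem} condition, namely that the $\frac{q-1}{2}$ positive differences of the pairs (each computed as $y-x$ with $x<y$ among the representatives $\{1,\ldots,q-1\}$) form exactly the set $\{1,2,\ldots,\frac{q-1}{2}\}$. Before that, I would record the arithmetic facts that feed the argument: since $q\equiv3$ (mod $8$) we have $q\equiv3$ (mod $4$), so $-1\in NQR(q)$ by Theorem \ref{col:menosuno}; moreover $2\in NQR(q)$ (the second supplement to quadratic reciprocity, as $q\equiv3$ (mod $8$)), and hence $\frac{q+1}{2}=2^{-1}\in NQR(q)$ because the inverse of a non-residue is again a non-residue. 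Both values of $\beta$ therefore lie in $NQR(q)$ and satisfy $\beta+1\neq0$, so the hypotheses of \cite{Avila} are met and $S_\beta$ is a strong starter for $\mathbb{Z}_q$.

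The combinatorial heart of the proof is a single bijection. For $a\in\{1,\ldots,q-1\}$ write $\rho(a)=\min\{a,q-a\}$ for the representative of $\{a,-a\}$ lying in $\{1,\ldots,\frac{q-1}{2}\}$. I would first show that $\rho$ restricts to a bijection from $QR(q)$ onto $\{1,\ldots,\frac{q-1}{2}\}$, and likewise from $NQR(q)$ onto $\{1,\ldots,\frac{q-1}{2}\}$. This follows from the partition of $\mathbb{Z}_q^*$ into the $\frac{q-1}{2}$ pairs $\{k,q-k\}$: because $-1\in NQR(q)$, Theorem \ref{inverso} forces each such pair to contain exactly one quadratic residue and one non-residue, and a counting argument (there are $\frac{q-1}{2}$ residues and $\frac{q-1}{2}$ pairs) upgrades this to the two stated bijections.

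Next I would compute the Skolem difference of a generic pair of the form $\{c,\,2c \bmod q\}$. Splitting into the cases $2c<q$ and $2c>q$ and comparing the representatives $c$ and $2c-q$, a short calculation gives that the positive difference of this pair equals $\rho(c)$ in either case; in particular it always lies in $\{1,\ldots,\frac{q-1}{2}\}$. For $\beta=2$ each pair of $S_\beta$ is already $\{\alpha^i,2\alpha^i\}$, so its difference is $\rho(\alpha^i)$, and as $i$ runs over $1,\ldots,\frac{q-1}{2}$ the elements $\alpha^i$ exhaust $QR(q)$; the first bijection then says these differences are precisely $1,\ldots,\frac{q-1}{2}$. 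For $\beta=\frac{q+1}{2}=2^{-1}$ I would rewrite the pair $\{\alpha^i,2^{-1}\alpha^i\}$ as $\{b,2b\}$ with $b=2^{-1}\alpha^i$, apply the same difference formula to obtain $\rho(b)$, and observe that $b$ ranges over $2^{-1}\cdot QR(q)=NQR(q)$; the second bijection again yields the full set of differences. Re-indexing the pairs by their differences shows $S_\beta$ is Skolem.

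The step I expect to require the most care is the difference computation, since it hinges on tracking the modular wrap-around of $2c$ and on identifying, in the case $\beta=\frac{q+1}{2}$, the correct reformulation of the pair into the $\{b,2b\}$ shape so that a single formula covers both values of $\beta$. The supporting facts ($-1,2,2^{-1}\in NQR(q)$) are standard, and the bijection lemma is the clean idea that makes the two cases collapse to the same counting statement.
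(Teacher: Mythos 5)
Your proposal is correct and takes essentially the same route as the paper: both proofs hinge on $2\in NQR(q)$ (hence $2^{-1}=\frac{q+1}{2}\in NQR(q)$), invoke the cited strong-starter result, and run the identical wrap-around computation --- your lemma that the pair $\{c,2c\}$ has positive difference $\min\{c,q-c\}$ is exactly the paper's two sub-cases ($\alpha^i\in Q_{\frac{1}{2}}$ or not), and your rewriting of the $\beta=2^{-1}$ pairs as $\{b,2b\}$ with $b=2^{-1}\alpha^i$ is exactly the paper's case (ii). The one refinement you add is the explicit bijection from $QR(q)$ (resp.\ $NQR(q)$) onto $\{1,\ldots,\frac{q-1}{2}\}$, which makes it self-contained that the differences exhaust $\{1,\ldots,\frac{q-1}{2}\}$ rather than merely lie in that set; the paper leaves this final step implicit, relying on the starter property to guarantee the differences are pairwise distinct up to sign.
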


\begin{proof}
Let $q=2t+1$ be an odd prime such that $q\equiv3$ (mod 8), and let $1<2<\ldots<2t$ be the order of the non-zero elements of $\mathbb{Z}_q^*$. Since $q\equiv3$ (mod 8) then $2\in NQR(q)$ (see for example \cite{Ireland}), which implies that $\frac{q+1}{2}=2^{-1}\in NQR(q)$, since $2\cdot2^{-1}\in QR(q)$. Let $Q_{\frac{1}{2}}=\{1,2,\ldots,t\}$, $\beta\in\{2,\frac{q+1}{2}\}$ and  $$S_\beta=\left\{\{\alpha,\beta\alpha\},\{\alpha^2,\beta\alpha^2\},\ldots,\{\alpha^\frac{q-1}{2},\beta\alpha^\frac{q-1}{2}\}\right\}.$$

We will prove that, if $\beta\alpha^i>\alpha^i$ then $\beta\alpha^i-\alpha^i\in Q_\frac{1}{2}$, and if $\alpha^i>\beta\alpha^i$ then $\alpha^i-\beta\alpha^i\in Q_\frac{1}{2}$, for all $i=1,\ldots,t$.	
\begin{itemize}
	\item [case(i):] Let us suppose that $\beta=2$. If $\alpha^i\in Q_\frac{1}{2}$, for some $i\in\{1,\ldots,t\}$, then $2\alpha^i>\alpha^i$, which implies that $2\alpha^i-\alpha^i=\alpha^i\in Q_\frac{1}{2}$. On the other hand, if $\alpha^i\not\in Q_\frac{1}{2}$, for some $i\in\{1,\ldots,t\}$, then $-\alpha^i\in Q_{\frac{1}{2}}$. Hence, $2(-\alpha^i)>-\alpha^i$, which implies that $-2\alpha^i+\alpha^i=-\alpha^i\in Q_{\frac{1}{2}}$. Therefore, $S_2$ is a strong Skolem starter for $\mathbb{Z}_q$. 
	
	\item [case (ii):] Let us suppose that $\beta=\frac{q+1}{2}=2^{-1}$. If $\beta\alpha^i\in Q_\frac{1}{2}$, for some $i\in\{1,\ldots,t\}$, then  $2(\beta\alpha^i)=\alpha^i>\beta\alpha^i$, which implies that $\alpha^i-\beta\alpha^i=\beta\alpha^i\in Q_\frac{1}{2}$, since $2\beta=1$. On the other hand, if $\beta\alpha^i\not\in Q_\frac{1}{2}$, for some $i\in\{1,\ldots,t\}$, then $-\beta\alpha^i\in Q_{\frac{1}{2}}$. Hence, $2(-\beta\alpha^i)=-\alpha^i>-\beta\alpha^i$, which implies that $-\alpha^i+\beta\alpha^i=-\beta\alpha^i\in Q_{\frac{1}{2}}$. Therefore, $S_{2^{-1}}$ is a strong Skolem starter for $\mathbb{Z}_q$.\qed 
\end{itemize}
\end{proof}

\subsection{Examples}
In this subsection there will be provided examples of strong Skolem starters for $\mathbb{Z}_{11}$, $\mathbb{Z}_{19}$ and $\mathbb{Z}_{43}$ given by The Main Theorem.

Let consider $\mathbb{Z}_{11}$ then $\alpha=4$ is a generator of $QR(11)$. Hence
\begin{center}
$QR(11)=\{1,3,4,5,9\}$ and $NQR(11)=\{2,6,7,8,10\}$.
\end{center}
Therefore $$S_2=\left\{\{4,8\},\{5,10\},\{9,7\},\{3,6\},\{1,2\} \right\}=\left\{\{1,2\},\{7,9\},\{3,6\},\{4,8\},\{5,10\}\right\},$$
and
$$S_6=\left\{\{4,2\},\{5,8\},\{9,10\},\{3,7\},\{1,6\} \right\}=\left\{\{9,10\},\{2,4\},\{5,8\},\{3,7\},\{1,6\}\right\},$$are strong Skolem starters for $\mathbb{Z}_{11}$.

Onthe other hand, let consider $\mathbb{Z}_{19}$ then $\alpha=4$ is a generator of $QR(19)$. Hence
$QR(19)=\{1,4,5,6,7,9,11,16,17\}$ and $NQR(19)=\{2,3,8,10,12,13,15,18\}$.

Therefore
\begin{eqnarray*}
	S_2&=&\{\{4,8\},\{16,13\},\{7,14\},\{9,18\},\{17,15\},\{11,3\},\{6,12\},\{5,10\},\{1,2\}\}\\
	&=&\{\{1,2\}, \{15,17\}, \{13,16\}, \{4,8\}, \{5,10\}, \{6,12\}, \{7,14\}, \{3,11\}, \{9,18\}\},
\end{eqnarray*}

and

\begin{eqnarray*}
	S_{10}&=&\{\{4,2\},\{16,8\},\{7,13\},\{9,14\},\{17,18\},\{11,15\},\{6,3\},\{5,12\},\{1,10\}\}\\
	&=&\{\{17,18\},\{2,4\},\{3,6\},\{11,15\},\{9,14\}, \{7,13\},\{5,12\},\{8,16\},\{1,10\}\},
\end{eqnarray*}
are strong Skolem starters for $\mathbb{Z}_{19}$.

Finally, let consider $\mathbb{Z}_{43}$ then $\alpha=9$ is a generator of $QR(43)$. Hence
\begin{center}
$QR(43)=\{1,4,6,9,10,11,13,14,15,16,17,21,23,24,25,31,35,36,38,40,41\}$\\ and \\$NQR(43)=\{2,3,5,7,8,12,18,19,20,22,26,27,28,29,30,32,33,34,37,39,42\}$.
\end{center}
Therefore
\begin{eqnarray*}
	S_2&=&\{\{9,18\},\{38,33\},\{41,39\},\{25,7\},\{10,20\},\{4,8\},\{36,29\},\\
	&&\{23,3\},\{35,27\},\{14,28\},\{40,37\},\{16,32\},\{15,30\},\{6,12\},\\
	&&\{11,22\},\{13,26\},\{31,19\},\{21,42\},\{17,34\},\{24,5\},\{1,2\}\},\\
	&=&\{\{1,2\},\{39,41\},\{37,40\},\{4,8\},\{33,38\},\{6,12\},\{29,36\},\\
	&&\{27,35\},\{9,18\},\{10,20\},\{11,22\},\{19,31\}  ,\{13,26\},\{14,28\},\\
	&&\{15,30\},\{16,32\},\{17,34\},\{7,25\},\{5,24\},\{3,23\},\{21,42\}\}
\end{eqnarray*}
and
\begin{eqnarray*}
	S_{22}&=&\{\{9,26\},\{38,19\},\{41,42\},\{25,34\},\{10,5\},\{4,2\},\{36,18\},\\
	&&\{23,33\},\{35,39\},\{14,7\},\{40,20\},\{16,8\},\{15,29\},\{6,3\},\\
	&&\{11,27\},\{13,28\},\{31,37\},\{21,32\},\{17,30\},\{24,12\},\{1,22\}\},\\
	&=&\{\{41,42\},\{2,4\},\{3,6\},\{35,39\},\{5,10\},\{31,37\},\{7,14\},\\
	&&\{8,16\},\{25,34\},\{23,33\},\{21,32\},\{12,24\},\{17,30\},\{15,29\},\\
	&&\{13,28\},\{11,27\},\{9,26\},\{18,36\},\{19,38\},\{20,40\},\{1,22\}\},
\end{eqnarray*}
are strong Skolem starters for $\mathbb{Z}_{43}$.


\

{\bf Acknowledgment}

Research was partially supported by SNI and CONACyT.

\bibliographystyle{amsplain}

\begin{thebibliography}{10}	
	
	\bibitem{Avila}
	C. A. Alfaro, C. Rubio-Montiel and A. Vázquez-Ávila, \emph{On two-quotient strong starters for $\mathbb{F}_q$}, Util. Math. Acepted.	
	
	\bibitem{MR0364013}
	B. A. Anderson, \emph{A class of starter induced {$1$}-factorizations}, Graphs and combinatorics ({P}roc. {C}apital {C}onf., {G}eorge {W}ashington {U}niv., {W}ashington, {D}.{C}., 1973), Springer, Berlin, 1974, pp. 180--185. Lecture Notes in Math., Vol. 406.
	
	\bibitem{MR728501}
	B. A. Anderson, P. J. Schellenberg and D. R. Stinson, 
	\emph{The existence of {H}owell designs of even side}, J. Combin. Theory Ser. A {\bf 36} (1984), no. 1, 23--55.
	
	\bibitem{burton2007elementary}
	D. M. Burton, \emph{Elementary number theory}, sixth ed., McGraw-Hill, 2007.
	
	
	\bibitem{MR0325419}
	J. F. Dillon and R. A. Morris, \emph{A skew {R}oom square of side {$257$}}, Utilitas Math. {\bf 4} (1973), 187--192
	
		\bibitem{dinitz1984}
	J. H. Dinitz, \emph{Room n-cubes of low order}, J. Austral. Math. Soc. Ser. A {\bf 36} (1984), 237--252.
	
	\bibitem{MR2206402}
	J. H. Dinitz and P. Dukes, \emph{On the structure of uniform one-factorizations from starters in finite fields}, Finite Fields Appl. {\bf 12} (2006), no. 2, 283--300.
	
	\bibitem{MR633117}
	J. H. Dinitz and D. R. Stinson, \emph{The spectrum of {R}oom cubes}, European J. Combin. {\bf 2} (1981), no. 3, 221--230.	

	
	\bibitem{MR1010576}
	J. H. Dinitz and D. R. Stinson,	\emph{Some new perfect one-factorizations from starters in finite fields}, J. Graph Theory, {\bf 13} (1989), no. 4, 405--415.
	
	
	\bibitem{MR0392622}
	K. B. Gross, \emph{Some new classes of strong starters}, Discrete Math. {\bf 12} (1975), no. 3, 225--243.
	
	\bibitem{MR2445243}
	G. H. Hardy  and E. M. Wright, \emph{An introduction to the theory of numbers},
	sixth ed., Oxford University Press, Oxford, 2008, Revised by D. R. Heath-Brown and J. H. Silverman, With a foreword by Andrew Wiles.
	
	\bibitem{MR623318}
	J. D. Horton, \emph{Room designs and one-factorizations}, Aequationes Math.
	{\bf 12} (1981), no. 1, 56--63.
	
	\bibitem{MR685627}
	J. D. Horton, \emph{The construction of {K}otzig factorizations}, Discrete Math. {\bf 43} (1983), no. 2-3, 199--206.
	
	\bibitem{MR1044227}
	J. D. Horton, \emph{Orthogonal starters in finite abelian groups}, Discrete Math. {\bf 79} (1990), no. 3, 265--278.
	
	
	\bibitem{Ireland}
	K. Ireland, and M. A. Rosen, \emph{A classical introduction to modern number theory}, 2nd ed., Springer-Verlag,  1990.
	
	\bibitem{MR808085}
	W. L. Kocay, D. R. Stinson and S. A. Vanstone, \emph{On strong starters in cyclic groups}, Discrete Math. {\bf 56} (1985), no. 1, 45--60.
	
	\bibitem{MR0249314}
	R. C. Mullin and E. Nemeth, \emph{An existence theorem for room squares}, Canad. Math. Bull. {\bf 12} (1969), 493--497.
	
	\bibitem{MR0260604}
	R. C. Mullin and E. Nemeth, \emph{On furnishing {R}oom squares}, J. Combinatorial Theory {\bf 7} (1969), 266--272.
	
	\bibitem{Shalaby}
	O. Ogandzhanyants, M. Kondratieva and N. Shalaby, \emph{Strong Skolem starters}, J. Combin. Des. {\bf 27} (2018), no. 1, 5--21.
	
	\bibitem{MR0314644}
	D. K. Ray-Chaudhuri and R. M. Wilson, \emph{Solution of {K}irkman's schoolgirl problem}, Combinatorics ({P}roc. {S}ympos. {P}ure {M}ath., {V}ol. {XIX},
	{U}niv. {C}alifornia, {L}os {A}ngeles, {C}alif., 1968), Amer. Math. Soc., Providence, R.I. (1971), 187--203.
	
	\bibitem{MR833796}
	A. Rosa and S. A. Vanstone, \emph{On the existence of strong {K}irkman cubes of order {$39$} and block size {$3$}}, Algorithms in combinatorial design theo\-ry, North-Holland Math. Stud., vol. 114, North-Holland, Amsterdam (1985), no. 1, 309--319.
	
	\bibitem{ShalabyThesis}
	N. Shalaby, \emph{Skolem sequences: generalizations and applications.} Thesis (PhD). McMaster University (Canada), 1991.	
	
	\bibitem{MR0234587}
	R. G. Stanton and R.C. Mullin,	\emph{Construction of {R}oom squares}, Ann. Math. Statist. {\bf 39} (1968), 1540--1548.
	
	\bibitem{MR793636}
	D. R. Stinson and S. A. Vanstone, \emph{A {K}irkman square of order {$51$} and block size {$3$}}, Discrete Math. {\bf 55} (1985), no. 1, 107--111.
	
	
	
	

	
	

\end{thebibliography}

\end{document}